\documentclass[12pt,a4paper,reqnol]{amsart}
\usepackage{amsmath,amsthm,amssymb}
\usepackage{a4wide}
\usepackage{hyperref}
\usepackage{enumerate}
\usepackage[all]{xy}

\usepackage{color}
\usepackage{ifpdf}
\ifpdf 
  \usepackage[pdftex]{graphicx}
  \DeclareGraphicsExtensions{.pdf,.png,.mps}
  \usepackage{pgf}
  \usepackage{tikz}
\else 
  \usepackage{graphicx}
  \DeclareGraphicsExtensions{.eps,.bmp}
  \DeclareGraphicsRule{.emf}{bmp}{}{}
  \DeclareGraphicsRule{.png}{bmp}{}{}
  \usepackage{pgf}
  \usepackage{tikz}
  \usepackage{pstricks}
\fi
\usepackage{epic}
\usepackage{bez123}

\newtheorem{thm}{Theorem}
\newtheorem{cor}[thm]{Corollary}

\newtheorem{lem}[thm]{Lemma}

\theoremstyle{definition}

\theoremstyle{remark}





\newcommand{\abs}[1]{\left\vert#1\right\vert}


\usepackage{amsmath}
\DeclareMathOperator{\inv}{inv}
\DeclareMathOperator{\tinv}{\mathbf{inv}}
\DeclareMathOperator{\jump}{jump}
\DeclareMathOperator{\tjump}{\mathbf{jump}}
\DeclareMathOperator{\lead}{lead}
\DeclareMathOperator{\lucky}{lucky}

\DeclareMathOperator{\tree}{tree}
\DeclareMathOperator{\critic}{critical}

\usepackage{amssymb}


\newcommand{\cirnum}[1]{\textcircled{\tiny $#1$}}
\newcommand{\boxnum}[1]{\framebox[1em][c]{$#1$}}
\newcommand{\plainnum}[1]{\makebox[1em][c]{$#1$}}

\begin{document}
\title{A New Bijection Between Forests and Parking Functions}%
\author{Heesung Shin}
\address{Universit\'e de Lyon; Universit\'e Lyon 1; Institut Camille Jordan, CNRS
UMR 5208; 43 boulevard du 11 novembre 1918, F-69622 Villeurbanne Cedex, France}
\email{hshin@math.univ-lyon1.fr}

\subjclass[2000]{05A15}%

\date{\today}
\begin{abstract}
In 1980, G. Kreweras \cite{MR603398} gave a recursive bijection between forests and parking functions.
In this paper we construct a nonrecursive bijection from forests onto parking functions, which answers a question raised by R. Stanley \cite[Exercise 6.4]{MR2383131}. As a by-product, we obtain a bijective proof of Gessel and Seo's formula for lucky statistic on parking functions \cite{MR2224940}.

\end{abstract}
\maketitle

\section{Introduction}
It is well-known (see e.g. \cite{MR0335294}) that there are several bijections between forests on $n$ vertices and parking functions with length $n$.
In 1980, G. Kreweras \cite{MR603398} presented his work that connected recursively inversion enumerators for trees with parking functions. After that this recursive bijection was also rewritten in R. Stanley's lecture note \cite{MR2383131} in 2004. In this book, he wrote that we need a \emph{nonrecursive} bijection $\varphi$ between the set $F_n$ of all rooted forests on $n$ vertices and the set $PF_n$ of all parking functions of length $n$ satisfying $$\inv(F) = {n+1 \choose 2} - a_1 - \cdots - a_n$$ where $\varphi(F) = (a_1,\ldots, a_n).$ (See \cite[Exercise 6.4]{MR2383131}) He mentioned that a ``nonrecursive'' bijection would be greatly preferred.


Gessel and Seo \cite{MR2224940} studied the statistic \emph{lucky} of parking functions. The generating function for \emph{lucky} is
\begin{equation} \label{GS}
\sum_{P \in PF_n} u^{\lucky P} = u \prod_{i=1}^{n-1} (i+(n-i+1) u),
\end{equation}
where the sum is over all parking function $P$ of length $n$. This formula is proved by them, but that is not bijective. On the other side, Seo and Shin \cite{MR2353128} introduced the statistic \emph{leader} of forests, and whose generating function is
\begin{equation} \label{SS}
\sum_{F \in F_n} u^{\lead F} = u \prod_{i=1}^{n-1} (i+(n-i+1) u),
\end{equation}
where the sum is over all forest $F$ on $n$ vertices, which is proved bijectively using reverse Pr\"ufer code. Since the right-hand sides of two equations \eqref{GS} and \eqref{SS} are same, we have to find a bijection between forests and parking functions which yields
\begin{equation}
\sum_{F \in F_n} u^{\lead F} = \sum_{P \in PF_n} u^{\lucky P}.
\end{equation}

In this paper, we construct a nonrecursive bijection $\varphi:F_n \to PF_n$ between forests and parking functions satisfying
\begin{eqnarray*}
\inv(F) &=& \textstyle {n+1 \choose 2} - \abs{P}\\
\lead(F) &=& \lucky(P)
\end{eqnarray*}
where $P = \varphi(F)$ and  $\abs{P}$ is the sum of sequences.


Moreover, reviewing the bijection $\varphi$, it has been observed that parking functions have a statistic corresponding to the statistic {\em tree}, the number of trees, in forests. When this statistic in parking functions is called {\em critical}, the bijection $\varphi$ preserves the statistics $\inv$, $\lead$, and $\tree$ for forests to $\jump$, $\lucky$, and $\critic$ for parking functions.


\section{Preliminary}\label{preliminary}
A graph on labeled vertices is called {\em labeled} and if a graph have one distinguished vertex, the vertex is called {\em root} and the graph is called {\em rooted}.
A \emph{tree} is a simple connected rooted labeled graph without cycles. A {\em forest} is a graph in which any two vertices are connected by at most one path and each connected component is a tree.


A vertex $j$ is called a \emph{descendant} of a vertex $i$ if a vertex $i$ lies on the unique path from the root to a vertex $j$. This is equivalent to the statement that a vertex $i$ is a \emph{ascendant} of a vertex $j$. An \emph{inversion} in a rooted graph is an ordered pair $(i,j)$ such that $i>j$ and $j$ is a descendant of $i$. Let $\inv(G:v)$ denote the number of ordered pairs $(v,x)$ where $v>x$ and $x$ is a descendant of $v$ in a rooted graph $G$ and $\inv(G)$ the number of all inversions in a rooted graph $G$. By definition, $\inv(G) = \sum_{v \in G} \inv(G:v)$.

An vertex $v$ is called a \emph{leader} in a rooted graph $G$ if $\inv(G:v)=0$, that is, the vertex $v$ is the smallest among its all descendants. By definition, every leaf is a leader. $\lead(G)$ denotes the number of all leaders in a rooted graph $G$.


%

\subsection*{3 rules drawing a forests}
We want that the shape of a tree is unique by drawing in only one way. When we draw a forest, we keep the following rules.
\begin{itemize}
\item Draw the roots at the top and all trees grow downward.
\item Put the trees from left to right according to maximum label in each tree.
\item Similarly, when vertices are drawn, put siblings from left to right in the order of maximum labels in their descendants.
\end{itemize}
It seems that the shape of a forest is figured as a rooted ordered forest after drawing.

\subsection*{Parking algorithm}
Given a sequence $(p_1,p_2,\ldots,p_n)$ of length $n$, where $p_i$ means the favorite parking space of the $i$-th car, we can park $n$ cars into parking spaces as follows:
\begin{enumerate}
\item Cars can be parked one by one from the first car to the last car into infinitely many parking spaces whose entrance is at the left.
\begin{center}
\input{park_space0.TpX}
\end{center}
\item When $i$-th car is parked, a car has to reach at its favorite parking space $p_i$. And then, attempt to be parked there. If the space is empty, the car is parked. Otherwise, attempt again at the next parking space without going back. Repeat this process until success to park.
\item Let $q_i$ be the actual parking space with $i$-th car.
\end{enumerate}
This method is called a \emph{Parking Algorithm} and the notation \emph{$PA$} is defined by $$PA(p_1,\ldots,p_n)=(q_1,\ldots,q_n).$$
For example, given a sequence $(4,3,3,1,5)$, five cars can be parked by the Parking Algorithm as following.
\input{park_example.TpX}
We get a sequence with length $5$, $PA(4,3,3,1,5)=(4,3,5,1,6)$.

If $PA(p_1,\ldots,p_n)=(q_1,\ldots,q_n)$ and the actual parking spaces $q_i$ is less than or equal to $n$ for all $i$, the sequence $(p_1,\ldots,p_n)$ is called a \emph{parking function}.



A \emph{jump} in a parking function is defined by the attempt to park the next space because of a non-empty parking space. Let $\jump(P:c)$ be the number of the jumps in order to park the car $c$, that is, the difference between the favorite parking space $p_c$ and the actual parking space $q_c$. So we make the formula $$\jump(p_1,\ldots,p_n:c) = q_c - p_c$$ where $PA(p_1,\ldots,p_n)=(q_1,\ldots,q_n)$. And $\jump(P)$ denotes the number of the jumps to park all cars. By definition, $\jump(P) = \sum_{c} \jump(P:c)$. Therefore, we have
$$\jump(P) = \sum_{c} \jump(P:c) = \sum_{c} q_c - p_c = {n+1 \choose 2} - \abs{P}$$
where $\abs{P} = \sum p_i$.

A \emph{lucky} in a parking function $P$ is the car $c$ where $\jump(P:c)=0$, that is, the car $c$ is parked at its favorite parking space. $\lucky(P)$ denotes the number of all lucky cars in $P$.
For example, for a given parking function $P=(2,4,2,1,3)$, we get the sequence $PA(P)=(2,4,3,1,5)$ by the parking algorithm.
\begin{center}
\begin{tabular}{c|l}
$q_c$     & \cirnum{1} \cirnum{2} \cirnum{3} \cirnum{4} \cirnum{5}\\
$c$       & \plainnum{4} \plainnum{1} \plainnum{3} \plainnum{2} \plainnum{5}  \\ \hline
$p_c$     & \cirnum{1} \cirnum{2} \cirnum{2} \cirnum{4} \cirnum{3}\\
$q_c-p_c$ & \boxnum{0} \boxnum{0} \boxnum{1} \boxnum{0} \boxnum{2}\\
\end{tabular}
\end{center}
With seeing above table, we can calculate the jump and the lucky. Actually, we have $\jump(P)=3$ since $\jump(P:3)=1$ and $\jump(P:5)=2$. Also, we have $\lucky(P)=3$ since lucky cars are 1, 2, and 4.

\section{The Map $\varphi : F_n \to PF_n$}

First of all, the map $\varphi$ is defined according to the diagram in Figure~\ref{diagram1}.
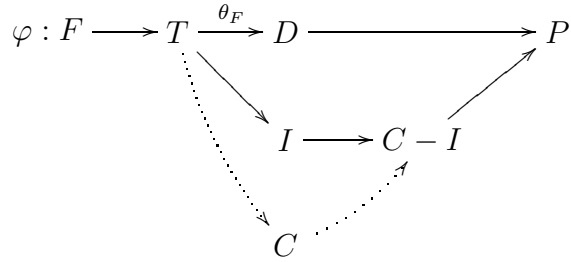
\begin{figure}[t]
\centering \mbox{\xymatrix{
\varphi : F \ar[r] & T \ar[r]^{\theta_F} \ar[dr] \ar@/_/@{.>}[ddr] & D \ar[rr]    &      & P\\
                   &                     & I \ar[r]          & C-I  \ar[ur]\\
                   &                     & C \ar@{.>}@/_/[ur] \\
}} \caption{Diagram of $\varphi$} \label{diagram1}
\end{figure}
Considering one forest $F\in F_{14}$ as an example, we are going to describe how to define the map~$\varphi$ as follows:
\begin{enumerate}
\item Draw the forest $F \in F_n$ by 3 rules in Section~\ref{preliminary}.
    \input{map_F.TpX}
\item Change the forest $F$ to the tree $T$, adding the vertex $n+1$ at the top and connecting new vertex to each root of trees in $F$.
    \input{map_T.TpX}
\item Rearrange the label on vertices by the following pseudo-code:
    \begin{quote}
    for all $v\in V$ do
    \begin{enumerate}[i)]
    \item find the maximum label $m$ on descendants of $v$.
    \item label $m$ on $v$.
    \item rearrange the other labels in descendants of $v$ by order-preserving.
    \end{enumerate}
    end do
    \end{quote}
    For example, after rearranging labels on descendants of $v$ labeled by $8$ in the tree $T$, we label $13$, the maximum of descendants on $v$, on the vertex $v$.
    \input{map_process.TpX}
    This is well-defined, that is independent to an order of choosing vertices $v \in V$.

\item The decreasing tree $D$ is made after acting above process on all vertices. The map $\theta_F$ is induced by the correspondence of labels in a tree after relabeling. For example, $\theta_F(10)=9$.
    \input{map_D.TpX}

\item Because we cannot remake the original tree $T$ from the tree $D$ alone, we need another tree induced
    from the unused information of $T$, that is, $\inv(T:v)$. So we make a new tree $I$ such that each
    vertice $v$ is labeled with $\inv(T:v)$. In order to distinguish it from other labels, we use the box.
    \input{map_I.TpX} Note that we can produce the original tree $T$ from only two trees $D$ and $I$.

\item Additionally, label the vertices indexed by post-order which is indicated by circle. The tree $C$ is
    only dependent to the {\em underlying graph} of $T$, that is, its tree structure. This is the reason why
    we define the method to redraw the tree in the inverse map $\varphi^{-1}$. \input{map_C.TpX}

\item Finally, we make the tree $D\times (C-I)$, in which the plain labels are induced by $D$ and the circled
    labels are induced by $C$ subtracted by $I$.
    \input{map_DCI.TpX}

\item In the sequel, we delete the tree-structure from $D\times (C-I)$ and write the circle number by the order of plain number. Since the last is always \cirnum{1}, we delete it and the rest becomes a {\em parking function}.
    Although all labels of $I$ are zeros in the worst case, the set of circle labels of $C-I$ becomes $[n]$. Since every permutation is a parking function, it becomes a parking function in the worst case.
    For continuing example, we get the sequences
    \begin{center}
    \begin{tabular}{l|l}
    \plainnum{1} \plainnum{2} \plainnum{3} \plainnum{4} \plainnum{5} \plainnum{6} \plainnum{7} \plainnum{8} \plainnum{9} \plainnum{10}
    \plainnum{11} \plainnum{12} \plainnum{13} \plainnum{14} & \plainnum{15}\\
    \cirnum{10} \cirnum{2} \cirnum{6} \cirnum{5} \cirnum{7} \cirnum{1} \cirnum{13} \cirnum{10} \cirnum{4} \cirnum{1}
    \cirnum{14} \cirnum{9} \cirnum{11} \cirnum{5} & \cirnum{1}\\
    \end{tabular}
    \end{center}
    Below the plain label \plainnum{15}, there is always circled label \cirnum{1}. So we can omit it, and then the second row (circled label) becomes a parking function $P$ of length $14$.
    $$P=\cirnum{10} \cirnum{2} \cirnum{6} \cirnum{5} \cirnum{7}
    \cirnum{1} \cirnum{13} \cirnum{10} \cirnum{4} \cirnum{1} \cirnum{14} \cirnum{9} \cirnum{11} \cirnum{5}$$ \end{enumerate}

\section{The Inverse Map $\varphi^{-1} : PF_n \to F_n$}
In this section, we construct the inverse map $\varphi^{-1}$ from parking functions to forests as Figure~\ref{diagram2}. We start from the previous example $P \in PF_{14}$, $$P=\cirnum{10} \cirnum{2} \cirnum{6} \cirnum{5} \cirnum{7} \cirnum{1} \cirnum{13} \cirnum{10} \cirnum{4} \cirnum{1} \cirnum{14} \cirnum{9} \cirnum{11} \cirnum{5}.$$
\begin{figure}[t]
\centering \mbox{\xymatrix{
\varphi^{-1} :  P \ar[r]^{PA} & PA(P) \ar[r] \ar[dr] \ar@/_/@{.>}[ddr] & D \ar[r] & T \ar[r] & F \\
                              &                                        & I \ar[ur] \\
                              &                                        & C \\
}} \caption{Diagram of $\varphi^{-1}$} \label{diagram2}
\end{figure}
After adding the \cirnum{1} at the end of $P$, 15 cars are parked by the parking algorithm as follows:
\begin{center}
\begin{tabular}{ll|l}
Parking Space& \cirnum{1} \cirnum{2} \cirnum{3} \cirnum{4} \cirnum{5} \cirnum{6} \cirnum{7} \cirnum{8} \cirnum{9} \cirnum{10}
\cirnum{11} \cirnum{12} \cirnum{13} \cirnum{14} & \cirnum{15}\\
Cars' Number, $c$& \plainnum{6} \plainnum{2} \plainnum{10} \plainnum{9} \plainnum{4} \plainnum{3} \plainnum{5} \plainnum{14} \plainnum{12} \plainnum{1}
\plainnum{8} \plainnum{13} \plainnum{7} \plainnum{11} & \plainnum{15}\\
\end{tabular}
\end{center}
At this time, we record the jump for every car in third row. And then, we draw an edge between the car $c$ and the closest car on its right which is larger than $c$.
\begin{center}
\begin{tabular}{ll|l}
&\multicolumn{2}{c}{\input{edge.TpX}}\vspace{-1em}\\
Parking Space& \cirnum{1} \cirnum{2} \cirnum{3} \cirnum{4} \cirnum{5} \cirnum{6} \cirnum{7} \cirnum{8} \cirnum{9} \cirnum{10}
\cirnum{11} \cirnum{12} \cirnum{13} \cirnum{14} & \cirnum{15}\\
Cars' Number, $c$& \plainnum{6} \plainnum{2} \plainnum{10} \plainnum{9} \plainnum{4} \plainnum{3} \plainnum{5} \plainnum{14} \plainnum{12} \plainnum{1}
\plainnum{8} \plainnum{13} \plainnum{7} \plainnum{11} & \plainnum{15}\\
$\jump(P:c)$ & \boxnum{0} \boxnum{0} \boxnum{2} \boxnum{0} \boxnum{0} \boxnum{0} \boxnum{0} \boxnum{3} \boxnum{0} \boxnum{0}
\boxnum{1} \boxnum{1} \boxnum{0} \boxnum{0} & \boxnum{14}\\
\end{tabular}
\end{center}
We get the tree-structure on the cars as vertices. If we consider $15$ as a root, we can rebuild three trees $C$, $D$, and $I$. The parking space in the first row becomes the tree $C$, the cars' number in the second row becomes the tree $D$, and the jump in the third row becomes the tree $I$.

Because the label $I(v)$ of vertex $v$ in the tree $I$ stands for $\inv(T:v)$, we can make the tree $T$ from the decreasing tree $D$ as follows:
\begin{quote}
for all vertex $v$ in the tree $D$ do
\begin{enumerate}[i)]
\item find $(I(v)+1)$-th smallest label $m$ on descendants of $v$. \item label $m$ on $v$. \item rearrange the other labels in descendants of $v$ by order-preserving.
\end{enumerate}
end do
\end{quote}
The pseudo-code above is an inverse map of $\theta_F$. After we make the tree $T$, we can get the forest $F$ from $T$ by deleting the maximum vertex of $T$.

\begin{thm}
The above algorithm from a parking function to a forest is the inverse map of $\varphi$.
\end{thm}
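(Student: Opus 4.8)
The plan is to show that the composite $\varphi^{-1}\circ\varphi$ is the identity on $F_n$ by tracking an arbitrary forest $F$ through the two diagrams and checking that each arrow of Figure~\ref{diagram2} reverses the corresponding arrow of Figure~\ref{diagram1}. Since both $\varphi$ and $\varphi^{-1}$ are defined as compositions of several intermediate steps, it suffices to verify reversibility stage by stage, in the order $P \mapsto PA(P)$, then $PA(P)\mapsto(C,D,I)$, then $(D,I)\mapsto T$, then $T\mapsto F$. First I would record what data is preserved: $\varphi(F)=P$ is obtained from the triple of trees $(C,\,D,\,C-I)$ by deleting tree structure and reading circled labels $C-I$ in the order of the plain labels $D$, with the final entry (always $\cirnum{1}$, sitting below plain $\cirnum{n+1}$) omitted.

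The first step is to check that appending $\cirnum{1}$ to $P$ and running the parking algorithm recovers the full list of pairs (parking space, car number, jump), i.e. the triple $(C,D,I)$ flattened. The key observation is that in step (8) of $\varphi$, the plain labels of $D$ form a \emph{decreasing} tree whose vertices, read in post-order, carry the circled labels of $C$; and the circled labels attached are precisely $C-I$, i.e. favourite space $=$ actual space $-$ jump. When we re-park, car $c$ (the plain label) has favourite space $p_c=(C-I)(c)$; because the actual spaces used are exactly the post-order labels $1,\dots,n+1$ (this is what makes $C$ depend only on the underlying tree, as stated in step (7)), the parking algorithm is forced to assign car $c$ to actual space $C(c)$, and the jump it records is $C(c)-(C-I)(c)=I(c)$. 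Here I would need the lemma, implicit in the construction, that post-order numbering of a rooted ordered tree always yields a sequence of ``actual spaces'' consistent with some parking function whose favourite spaces are $C-I$ — equivalently, that $|C^{-1}([k])\cap\text{(subtree data)}|$ satisfies the parking inequality; this is a short monotonicity argument on post-order intervals. Having done this, the first and third rows reproduce $C$ and $I$, and the second reproduces $D$, since drawing an edge from each car to the nearest larger car on its right reconstructs exactly the parent relation encoded by post-order adjacency in $C$.

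Next I would verify that the inverse relabelling pseudo-code sends $(D,I)$ back to $T$. By construction of $\theta_F$ in step (3), the label of $v$ in $T$ was the $(I(v)+1)$-st smallest among the descendant labels of $v$ in $T$, after which the subtree below $v$ was order-preservingly relabelled to produce $D$; the inverse code does literally the reverse substitution at each vertex. The only subtlety is well-definedness independent of the order in which vertices are processed — but the excerpt already asserts this for the forward map, and the same argument (the operations at a vertex $v$ and at a descendant $w$ commute because relabelling below $v$ is order-preserving, hence does not disturb relative order below $w$) applies verbatim to the inverse. Finally, deleting the maximum vertex $n+1$ of $T$ undoes step (2) exactly, returning $F$, and the 3-drawing-rules of Section~\ref{preliminary} guarantee that the ordered shape we recover is the unique ordered drawing of $F$, so there is no ambiguity.

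The main obstacle I expect is the parking-algorithm step: one must prove not just that re-parking \emph{can} produce $(C,D,I)$ but that it \emph{must} — that the favourite spaces $C-I$ together with the parking rule determine the actual spaces to be precisely the post-order labels $C$, with no collision and no overshoot beyond $n+1$. This is where the structure of the decreasing tree $D$ and the post-order numbering interact: I would argue by induction on the vertices in parking (equivalently post-) order, showing that when car $c$ arrives, spaces $1,\dots,C(c)-1$ that lie ``in its way'' are exactly the $C$-labels already occupied, so $c$ lands in $C(c)$ and the jump is $I(c)$. Once that inductive step is nailed down, the remaining stages are bookkeeping, and the theorem follows by composing the four reversals.
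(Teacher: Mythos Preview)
Your proposal is essentially correct and follows the same line as the paper's own proof: the crux is that re-parking $P$ (with the trailing $\cirnum{1}$ restored) lands each car $c$ in actual space $C(c)$ with jump $I(c)$, so the triple $(C,D,I)$ is recovered and the remaining steps $(D,I)\to T\to F$ are bookkeeping reversals. The paper's argument is more compressed than yours: it first treats the case $I\equiv 0$, where the favourites $C$ are distinct, $PA(P)=P$, and the nearest-larger-on-the-right rule visibly recovers the decreasing tree $D$; it then observes that for general $I$, lowering the favourite of car $c$ by $I(c)=\inv(T:v)$ does not change its actual parking space, because $D$ is decreasing so every descendant of the corresponding vertex has already parked, filling the post-order block immediately below $C(c)$. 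Your direct induction on parking order packages the same observation and is, if anything, a cleaner way to say it.

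One slip to fix: you write ``induction on the vertices in parking (equivalently post-) order''. These two orders are \emph{not} the same --- parking order is by car number (the $D$-label $1,2,\ldots,n+1$), whereas post-order is by the $C$-label --- and conflating them would break the argument. Your induction must run over parking order; the role of post-order is the structural fact you use inside the inductive step, namely that the proper descendants of the vertex $v$ with $D$-label $c$ occupy exactly the contiguous interval of $C$-positions ending at $C(c)-1$, and (since $D$ is decreasing) they have all parked by the time car $c$ arrives, while position $C(c)$ itself is still free. With that wording corrected, your inductive step goes through and the side lemma about ``post-order intervals satisfying the parking inequality'' becomes superfluous: once every car lands at its $C$-label you have $q_c=C(c)\le n+1$ automatically.
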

\begin{proof}
It is enough to show that the tree-structure deleted in the map $\varphi$ and the tree-structure made in the inverse map $\varphi^{-1}$ are the same. If all labels of the tree $I$ are zeros, the circled labels in the tree $D\times (C-I)$ are distinct. If so, a parking function $P$ is a permutation, that is, all cars are lucky. If $P$ is a permutation, $PA(P)= P$. A parent of a car $c$ is larger than $c$ since the tree $D$ is decreasing and it is on the right of $c$ after parking algorithm because of a post-order. In this time, we can make the tree $D$ from a permutation $P^{-1}$.

Since the tree $D$ is decreasing, all cars corresponding to descendants of $v$ already parked when a car corresponding to $v$ is parking. Using labels of the tree $C-I$ instead of the tree $C$, a favorite parking space of car $c$ corresponding to a vertex $v$ decreases by $\inv(T:v)$ but parking space at which car $c$ parks actually is not changed. Hence we can make the tree $D$ from $PA(P)$.
\end{proof}

\section{Statistics}


After we observe the map $\varphi$, we can get Lemma~\ref{lemma}
\begin{lem}
\label{lemma}
The map $\varphi$ has two following properties.
\begin{itemize}
\item $\inv(F:v) = \jump(\varphi(F):\theta_F(v))$ \item If $v$ is a root of a tree in $F$, then $\theta_F(v)$ is a right-to-left maximum in $PA(\varphi(F))^{-1}$.
\end{itemize}
\end{lem}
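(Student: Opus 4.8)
The plan is to locate, for every vertex $v\neq n+1$ of $T$, the parking space eventually occupied by the car $c=\theta_F(v)$ when the parking algorithm runs on $P=\varphi(F)$, and then to read both assertions off that location. Write $v=\theta_F^{-1}(c)$ for the vertex carrying the $D$-label $c$, and let $C(v)$ be its post-order index. First I would record two immediate consequences of the construction: adjoining the vertex $n+1$ above all roots of $F$ adds no inversion at any $v\leq n$, so $\inv(T:v)=\inv(F:v)$; and, by the definition of the tree $D\times (C-I)$, the favorite space of car $c$ is $P_c=C(v)-\inv(T:v)$.

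The central point is that car $c$ actually parks at space $C(v)$, i.e.\ $q_c=C(v)$ where $PA(P)=(q_1,\dots,q_n)$. This is already contained in the proof of the preceding Theorem: there it is shown that feeding the algorithm the labels of $C-I$ rather than of $C$ lowers each favorite space $C(v)$ by $\inv(T:v)$ without changing where the car parks, and with favorite spaces taken from $C$ alone $P$ is a permutation, so each car parks at its favorite $C(v)$. If a self-contained argument is wanted, I would induct on the processing order $c=1,2,\dots$: since $D$ is decreasing, all proper descendants of $v$ carry $D$-labels $<c$ and are already parked, and the post-order property forces them to occupy exactly the block $\{C(v)-d,\dots,C(v)-1\}$, where $d$ is the number of proper descendants of $v$; as $0\leq\inv(T:v)\leq d$, car $c$ enters at or just below this block, scans upward over occupied spaces, and finds $C(v)$ empty because $C$ is injective and no car $<c$ sits there. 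Once $q_c=C(v)$ is in hand, the first bullet is immediate:
\[
\jump\bigl(\varphi(F):\theta_F(v)\bigr)=q_c-P_c=C(v)-\bigl(C(v)-\inv(T:v)\bigr)=\inv(T:v)=\inv(F:v).
\]

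For the second bullet I would unwind what $q_c=C(v)$ says about $PA(P)^{-1}$: the permutation $PA(P)$ sends the car $c$ to the post-order index of $\theta_F^{-1}(c)$, so $PA(P)^{-1}$ is the sequence obtained by listing the vertices of $T$ of post-order index $\leq n$ --- that is, all non-root vertices --- in post-order and reading off their $D$-labels. Now let $v$ be the root of a tree in $F$, so in $D$ the vertex $\rho$ with $D(\rho)=\theta_F(v)$ is a child of the root $n+1$. In post-order the vertices occurring after $\rho$ are precisely those lying in the subtrees rooted at the siblings of $\rho$ drawn to its right, together with $n+1$ itself, which does not appear in the length-$n$ sequence $PA(P)^{-1}$. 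Because $D$ is decreasing, the largest label inside any subtree is the label of its root, so the third drawing rule places the children of $n+1$ from left to right in decreasing order of $D$-label; hence every vertex occurring after $\rho$ in post-order has $D$-label at most that of some right sibling of $\rho$, and in particular strictly less than $D(\rho)=\theta_F(v)$. Thus $\theta_F(v)$ is a right-to-left maximum of $PA(P)^{-1}$.

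The manipulations with $P_c$, $q_c$ and $\jump$ are routine; the step demanding genuine care is the last one, where one must observe that the fact that $D$ is decreasing converts the max-label drawing rule into the statement that siblings are ordered by their own labels, which is exactly what produces the right-to-left maximum. The deepest ingredient is the identity $q_c=C(v)$, but since it is already established in the proof of the Theorem, here it can simply be cited.
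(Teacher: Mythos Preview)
Your proof is correct and follows essentially the same approach as the paper's: both deduce the first bullet from the key identity $q_c=C(v)$ established in the proof of the preceding Theorem (you additionally supply a clean self-contained induction for it), and both obtain the second bullet from the fact that the only vertices after a child $\rho$ of $n+1$ in post-order lie in subtrees of right siblings with smaller $D$-labels. Your write-up is considerably more explicit than the paper's two-line argument; the one minor slip is that for children of $n+1$ it is the second drawing rule (ordering of trees) rather than the third that applies, though the content is identical.
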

\begin{proof}
If we use the labels of the tree $C$ instead of the tree $C-I$, all cars are lucky. Using labels of the tree $C-I$ instead of the tree $C$, $\jump(P:c)$ increases by $\inv(T:v)$. Thus $\inv(F:v) = \jump(\varphi(F):\theta_F(v))$.

If a vertex $v$ is a root of a tree in $F$, a parent of $v$ is the root of $T$. So there is no car larger than the car $\theta_F(v)$ on its right. Hence the car $\theta_F(v)$ is a right-to-left maximum in $PA(\varphi(F))^{-1}$.
\end{proof}

Let $\tinv(F)$ be a {\em type of inversions of $F$} and $\tjump(P)$ be a {\em type of jumps of $P$} defined by \begin{eqnarray*}
\tinv(F) &=& (\lead_0(F),\ldots, \lead_n(F))\\
\tjump(P) &=& (\lucky_0(P),\ldots, \lucky_n(P)
\end{eqnarray*}
where $\lead_i(F)$ is the number of vertices $v$ such that $\inv(F:v)=i$ and $\lucky_i(F)$ is the number of cars $c$ such that $\jump(P:c)=i$.

The car $c$ is called {\em critical} if there is no empty parking space on the right of the car $c$ after it is parked. Let $\tree(F)$ be the number of trees in a forest $F$ and $\critic(P)$ be the number of critical cars in a parking function $P$. Note that any critical car becomes a right-to-left maximum in $PA(P)^{-1}$ and its converse is also true.


\begin{thm}[Main Theorem]
\label{map}
There is a nonrecursive bijection $\varphi:F_n \to PF_n$ between forests and parking functions satisfying
$$(\tinv,\tree)(F) = (\tjump,\critic)(\varphi(F))$$
\end{thm}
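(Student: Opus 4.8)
The plan is to separate the two-coordinate identity $(\tinv,\tree)(F)=(\tjump,\critic)(\varphi(F))$ into the type-of-inversions part and the tree-counting part, and to verify each using only what is already available: that $\varphi$ is a nonrecursive bijection (the construction of $\varphi$ together with the inverse-map theorem), Lemma~\ref{lemma}, and the remark that a car of a parking function $P$ is critical exactly when the corresponding car is a right-to-left maximum of $PA(P)^{-1}$.

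For $\tinv(F)=\tjump(\varphi(F))$ I would first record two bookkeeping facts about the relabeling map $\theta_F$ on the tree $T$. First, $\theta_F(n+1)=n+1$, since the largest label among the descendants of the root of $T$ is the root itself; hence $\theta_F$ restricts to a bijection of $[n]$ onto itself. Second, attaching the new vertex $n+1$ above the roots of $F$ changes no descendant set of a vertex $v\in[n]$, so $\inv(F:v)=\inv(T:v)$ for all $v\in[n]$. With these in hand, the first bullet of Lemma~\ref{lemma}, namely $\inv(F:v)=\jump(\varphi(F):\theta_F(v))$, can be summed: for each fixed $i\ge0$, counting the $v\in[n]$ with $\inv(F:v)=i$ and reindexing the count by $c=\theta_F(v)$ gives $\lead_i(F)=\lucky_i(\varphi(F))$. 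Since this holds for every $i$, we obtain $\tinv(F)=\tjump(\varphi(F))$.

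For $\tree(F)=\critic(\varphi(F))$ the intended chain of equalities is: $\tree(F)$ is the number of children of the added vertex $n+1$ in $T$ (by the construction of $T$ from $F$); because $\theta_F$ is an isomorphism of rooted trees $T\to D$ fixing $n+1$, this equals the number of children of $n+1$ in $D$; and, by the description of $\varphi^{-1}$ (which the inverse-map theorem certifies is the inverse of $\varphi$), the tree $D$ is recovered from $PA(\varphi(F))$ by joining each car $c$ to the nearest car larger than $c$ lying to its right, with the car $n+1$ --- which prefers space $1$ but, being parked last in a length-$n$ parking function, ends at the rightmost space $n+1$ --- taken as the root. Consequently a car is a child of $n+1$ in $D$ exactly when every car to its right among spaces $1,\dots,n$ is smaller than it, that is, exactly when it is a right-to-left maximum of $PA(\varphi(F))^{-1}$; by the remark before the theorem these are precisely the critical cars of $\varphi(F)$. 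Chaining the equalities gives $\tree(F)=\critic(\varphi(F))$, which together with the previous paragraph proves the theorem.

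I expect the one point that needs care to be this last identification of ``child of the root of $D$'' with ``right-to-left maximum of $PA(\varphi(F))^{-1}$'': it rests on the facts that the tree $D$ built inside $\varphi^{-1}$ is the same $D$ built inside $\varphi$ (the content of the inverse-map theorem) and that its root $n+1$ occupies the far-right parking slot. The second bullet of Lemma~\ref{lemma} already delivers one inclusion --- roots of $F$ map under $\theta_F$ to right-to-left maxima of $PA(\varphi(F))^{-1}$ --- and the reconstruction description supplies the converse. Everything else (the fixed point $\theta_F(n+1)=n+1$, the invariance of $\inv(F:v)$ under adding the top vertex, and the reindexing in the type identity) is routine, so no extended computation should be required.
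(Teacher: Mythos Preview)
Your proposal is correct and follows essentially the same route as the paper: both parts are deduced from Lemma~\ref{lemma} together with the remark that critical cars coincide with right-to-left maxima of $PA(P)^{-1}$. You are in fact more careful than the paper on the second part: the paper simply asserts that under $\theta_F$ ``each root of trees in $F$ corresponds to each of the right-to-left maximums,'' whereas Lemma~\ref{lemma} only states one inclusion; your use of the edge-drawing rule from the inverse map to identify children of $n+1$ in $D$ with right-to-left maxima supplies the missing converse explicitly.
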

\begin{proof}
By Lemma~\ref{lemma}, there is the correspondence $\theta_F$ between all vertices $v$ in the forest $F$ and all cars $c$ in the parking function $\varphi(F)$ such that
$\inv(F:v) = \jump(\varphi(F):\theta_F(v))$.
So we have $\lead_i(F)=\lucky_i(\varphi(F))$ for all $i=0,\ldots,n$ and $$\tinv(F)=\tjump(\varphi(F)).$$

By the map $\theta_F$, each root of trees in $F$ corresponds to each of right-to-left maximums in $PA(\varphi(F))^{-1}$. Hence we have $$\tree(F) = \critic(\varphi(F)).$$

\end{proof}

Let $I_n$ and $J_n$ be homogeneous polynomials of degree $n$,
\begin{eqnarray*}
I_n(\mathbf{q};c) &=& \sum_{F \in F_n} \mathbf{q}^{\tinv(F)} c^{\tree(F)}\\
J_n(\mathbf{q};c) &=& \sum_{P \in PF_n} \mathbf{q}^{\tjump(P)} c^{\critic(P)}
\end{eqnarray*}
where $\mathbf{q}^{\mathbf{v}} = q_0^{v_0} q_1^{v_1}\cdots q_n^{v_n}$.

\begin{thm}
\label{main}
For a nonnegative integer $n$, we have
$$I_n(\mathbf{q};c) = J_n(\mathbf{q};c)$$
\end{thm}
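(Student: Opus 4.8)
The plan is to obtain Theorem~\ref{main} as a formal corollary of the Main Theorem (Theorem~\ref{map}), by re-indexing the sum defining $J_n$ along the bijection $\varphi$. First I would recall that $\varphi:F_n\to PF_n$ is a bijection, so for any function $f$ on parking functions one has $\sum_{P\in PF_n}f(P)=\sum_{F\in F_n}f(\varphi(F))$. Applying this with $f(P)=\mathbf{q}^{\tjump(P)}c^{\critic(P)}$ yields
$$J_n(\mathbf{q};c)=\sum_{P\in PF_n}\mathbf{q}^{\tjump(P)}c^{\critic(P)}=\sum_{F\in F_n}\mathbf{q}^{\tjump(\varphi(F))}c^{\critic(\varphi(F))}.$$

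Next I would invoke the identity $(\tinv,\tree)(F)=(\tjump,\critic)(\varphi(F))$ from Theorem~\ref{map}, which asserts precisely that $\tjump(\varphi(F))=\tinv(F)$ as ordered $(n+1)$-tuples and $\critic(\varphi(F))=\tree(F)$ as integers, for every forest $F\in F_n$. Substituting termwise into the previous display turns the right-hand side into $\sum_{F\in F_n}\mathbf{q}^{\tinv(F)}c^{\tree(F)}$, which is the definition of $I_n(\mathbf{q};c)$. Hence $I_n(\mathbf{q};c)=J_n(\mathbf{q};c)$; the degenerate case $n=0$ is covered as well, both sides being equal to $1$.

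I do not expect a genuine obstacle at this step: the theorem is a bookkeeping consequence of the structural result already in hand. The only point to verify is a notational one, namely that the monomials $\mathbf{q}^{\mathbf{v}}=q_0^{v_0}q_1^{v_1}\cdots q_n^{v_n}$ and the powers of $c$ read off the components of $\tinv(F)$, $\tjump(\varphi(F))$ and the values $\tree(F)$, $\critic(\varphi(F))$ in exactly the way Theorem~\ref{map} compares them; this is immediate from the definitions of $\tinv$, $\tjump$, $\tree$, and $\critic$. All the real content — the explicit construction of $\varphi$, the verification that $\varphi^{-1}$ is a genuine two-sided inverse, and the statistic-preservation recorded in Lemma~\ref{lemma} and Theorem~\ref{map} — has already been established, so nothing further is needed here.
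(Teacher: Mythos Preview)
Your proposal is correct and follows essentially the same approach as the paper: both derive the identity by re-indexing the sum defining $J_n$ along the bijection $\varphi$ and invoking the statistic-preservation $(\tinv,\tree)(F)=(\tjump,\critic)(\varphi(F))$ from Theorem~\ref{map} to match monomials term by term. Your write-up is simply a more detailed version of the paper's two-sentence argument.
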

\begin{proof}
By Theorem~\ref{map}, there exists the bijection $\varphi:F_n \to PF_n$ such that $$\mathbf{q}^{\tinv(F)} c^{\tree(F)} = \mathbf{q}^{\tjump(\varphi(F))} c^{\critic(\varphi(F))}.$$ So we have $I_n(\mathbf{q};c) = J_n(\mathbf{q};c)$.
\end{proof}

\begin{cor}
We have
$$\sum_{F \in F_n} q^{\inv(F)} u^{\lead(F)} c^{\tree(F)} = \sum_{P \in PF_n} q^{\jump(P)} u^{\lucky(P)}c^{\critic(P)}$$
\end{cor}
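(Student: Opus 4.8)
The plan is to obtain the corollary as a one-variable specialization of Theorem~\ref{main}. Recall that $\tinv(F) = (\lead_0(F), \ldots, \lead_n(F))$ records how many vertices $v$ of $F$ satisfy $\inv(F:v) = i$, and likewise $\tjump(P)$ records how many cars $c$ of $P$ satisfy $\jump(P:c) = i$. I would first record the two elementary identities that translate the ``type'' statistics into the coarser ones: since $\inv(F) = \sum_{v \in F} \inv(F:v)$, grouping vertices by the value of $\inv(F:v)$ gives $\inv(F) = \sum_{i=0}^{n} i\,\lead_i(F)$, and by definition a leader is exactly a vertex $v$ with $\inv(F:v) = 0$, so $\lead(F) = \lead_0(F)$. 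The analogous identities $\jump(P) = \sum_{i=0}^{n} i\,\lucky_i(P)$ and $\lucky(P) = \lucky_0(P)$ hold for parking functions by the same reasoning, using $\jump(P) = \sum_c \jump(P:c)$ and the definition of a lucky car.

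Next I would substitute into the polynomial identity $I_n(\mathbf{q};c) = J_n(\mathbf{q};c)$ of Theorem~\ref{main} the values $q_0 = u$ and $q_i = q^{\,i}$ for $i = 1, \ldots, n$. Under this substitution,
\[
\mathbf{q}^{\tinv(F)} = q_0^{\lead_0(F)} \prod_{i=1}^{n} q_i^{\lead_i(F)} = u^{\lead_0(F)}\, q^{\sum_{i=1}^n i\,\lead_i(F)} = u^{\lead(F)}\, q^{\inv(F)},
\]
using the two identities above (note the $i=0$ term contributes nothing to the exponent of $q$, so extending the sum to include it is harmless). The factor $c^{\tree(F)}$ is untouched. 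The same computation on the right-hand side turns $\mathbf{q}^{\tjump(P)} c^{\critic(P)}$ into $q^{\jump(P)} u^{\lucky(P)} c^{\critic(P)}$. Summing over $F \in F_n$ on the left and over $P \in PF_n$ on the right then yields exactly the claimed identity.

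There is essentially no obstacle here: once Theorem~\ref{main} is in hand, the corollary is a routine specialization, and the only thing to be careful about is the bookkeeping that $\lead(F)$ is the \emph{zeroth} coordinate of $\tinv(F)$ (not, say, its sum) and that $\inv(F)$ is the weighted sum $\sum i\,\lead_i(F)$. If one wanted a self-contained argument avoiding the multivariate polynomials altogether, one could instead invoke Theorem~\ref{map} directly: the bijection $\varphi$ satisfies $\tinv(F) = \tjump(\varphi(F))$ and $\tree(F) = \critic(\varphi(F))$, hence termwise $\inv(F) = \jump(\varphi(F))$, $\lead(F) = \lucky(\varphi(F))$, and $\tree(F) = \critic(\varphi(F))$, so $\varphi$ matches the summands on the two sides exactly. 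Either route is short; I would present the specialization of Theorem~\ref{main} as the cleanest.
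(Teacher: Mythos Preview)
Your proposal is correct and matches the paper's proof essentially line for line: the paper specializes Theorem~\ref{main} via $q_0 = u$, $q_i = q^i$ for $i \ge 1$, and invokes exactly the four bookkeeping identities $\inv(F) = \sum_i i\,\lead_i(F)$, $\lead(F) = \lead_0(F)$, $\jump(P) = \sum_i i\,\lucky_i(P)$, $\lucky(P) = \lucky_0(P)$ that you record. Your alternative remark that one could bypass the polynomials and cite Theorem~\ref{map} directly is also fine but not what the paper does.
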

\begin{proof}
By Theorem~\ref{main}, $I_n(u,q,q^2,\ldots;c) = J_n(u,q,q^2,\ldots;c)$.
Simplifying it by 
\begin{eqnarray*}
\inv(F) &=& \textstyle\sum_i i \cdot \lead_i(F),\\
\lead(F) &=& \lead_0(F),\\
\jump(P) &=& \textstyle\sum_i i \cdot \lucky_i(P) = {n+1 \choose 2} - \abs{P},\mbox{ and}\\
\lucky(P) &=& \lucky_0(P),
\end{eqnarray*}
we are done.
\end{proof}

%


\begin{cor}
We have
$$\sum_{P \in PF_n} c^{\critic(P)} u^{\lucky(P)} = cu \prod_{i=1}^{n-1}(i + (n-i)u + cu),$$
which have a bijective proof.
\end{cor}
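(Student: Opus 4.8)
The plan is to transport the identity to forests and then refine the reverse Pr\"ufer code argument of Seo and Shin.

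By Theorem~\ref{map}, the bijection $\varphi$ satisfies $(\lead,\tree)(F)=(\lucky,\critic)(\varphi(F))$ (these being the $0$-components of $\tinv$ and $\tjump$ together with $\tree$ and $\critic$), so
$$\sum_{P\in PF_n}c^{\critic(P)}u^{\lucky(P)}=\sum_{F\in F_n}c^{\tree(F)}u^{\lead(F)}.$$
It therefore suffices to give a bijective proof of
$$\sum_{F\in F_n}c^{\tree(F)}u^{\lead(F)}=cu\prod_{i=1}^{n-1}\bigl(i+(n-i)u+cu\bigr).\qquad(\star)$$
Putting $c=1$ in $(\star)$ is exactly \eqref{SS}, which Seo and Shin proved bijectively via the reverse Pr\"ufer code; I would obtain $(\star)$ by carrying the tree-count through that bijection.

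Concretely, recall that the reverse Pr\"ufer code reads $F$ off as a word of length $n-1$, building $F$ from a one-vertex seed by $n-1$ successive moves, and that in \eqref{SS} the $i$-th move contributes the factor $i+(n-i+1)u$: among the $n+1$ possible moves, $i$ do not produce a new leader and $n-i+1$ do, while the prefactor $u$ records a forced leader (the vertex labelled $1$ is a leader in every forest, being the global minimum). The refinement I need is that, among the $n-i+1$ leader-producing moves at step $i$, exactly one also creates a new tree (and hence carries the extra factor $c$), the other $n-i$ leaving the number of trees unchanged; and that the seed, besides contributing a leader, always contributes one tree, upgrading the prefactor $u$ to $cu$. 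Granting this, the $i$-th factor becomes $i+(n-i)u+cu$ and the product $(\star)$ follows; composing this refined code with $\varphi^{-1}$ then realizes the corollary as an explicit bijection from $PF_n$ to the set of such words.

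The main obstacle is exactly this bookkeeping. One has to pin down, inside the reverse Pr\"ufer decoding, which single leader-creating move at each step corresponds to ``attach the new vertex as the root of a new tree'', check that this move is available and unique at every step regardless of the rest of the word, and confirm that it meshes with the way Seo and Shin isolate the leader statistic. I expect no structural surprises, but I note that the $n$ appearing in \emph{every} factor of $(\star)$ rules out a naive ``delete vertex $n$'' recursion on $R_n(u,c)=\sum_{F\in F_n}c^{\tree(F)}u^{\lead(F)}$; the proof genuinely has to live inside the global encoding rather than an inductive peeling. Once the local refinement at step $i$ is verified, assembling the product and the bijection is routine.
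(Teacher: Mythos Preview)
Your approach is essentially the paper's: transport to forests via $\varphi$ (using Theorem~\ref{map}) and then invoke the forest-side identity $(\star)$. The only difference is that you propose to \emph{re-derive} $(\star)$ by refining the reverse Pr\"ufer bijection to track $\tree$, whereas the paper simply cites this: the Seo--Shin paper \cite{MR2353128} already proves the two-variable version
\[
\sum_{F\in F_n} u^{\lead(F)} c^{\tree(F)} = P_n(1,u,cu), \qquad P_n(a,b,c)=c\prod_{i=1}^{n-1}(ia+(n-i)b+c),
\]
bijectively (their Eq.~(1)). So the ``main obstacle'' you flag---identifying the unique leader-creating move at each step that also spawns a new tree---has already been carried out there, and you can quote it rather than redo it.
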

\begin{proof}
By Theorem~\ref{main}, $I_n(u,1,1,\ldots;c) = J_n(u,1,1,\ldots;c)$.
We get $$\sum_{F \in F_n} u^{\lead(F)} c^{\tree(F)} = \sum_{P \in PF_n} u^{\lucky(P)} c^{\critic(P)}.$$ Recall the formula in \cite[Eq.(1)]{MR2353128}, $$\sum_{F \in F_n} u^{\lead(F)} c^{\tree(F)} = P_n(1,u,cu)$$ where $P_n(a,b,c)=c \prod_{i=1}^{n-1}(ia + (n-i)b + c).$ Combining above two formulae, we are done.
\end{proof}

Forests and parking functions have not only the same cardinality, but also many equinumerous statistics. The map $\varphi$ corresponds simultaneously between statistics $\inv$, $\lead$, and $\tree$ in forests and statistics $\jump$, $\lucky$, and $\critic$ in parking functions. Also, while the $\varphi$ makes a correspondence between combinatorial objects, the $\theta$ makes a correspondence between vertices in forests and cars in parking functions in detail.


\nocite{MR0335294}

\bibliographystyle{alpha}
\bibliography{Shi08a}
\end{document}